% AMSLaTeX File %
\documentclass[12pt]{amsart}    
\usepackage{amscd}

\def\cal#1{\mathcal{#1}}

%\let\fiverm\tiny
%\input prepicte
%\input pictex
%\input postpict
%------    GENERAL MACROS    -----
%
% Standard rings and fields, affine and projective
%
\def\NZQ{\Bbb}               % the font for N,Z,Q,R,C

\def\QQ{{\NZQ Q}}
\def\ZZ{{\NZQ Z}}
\def\PP{{\NZQ P}}

\def\CC{{\NZQ C}}

\def\PP{{\NZQ P}}

%
%------------------------------------------------
% Symbols in "Fraktur"
%
\def\frk{\frak}               % font for "Fraktur"

\def\mm{{\frk m}}

%
%------------------------------------------------
% Small letters in bold
%

%
\def\opn#1#2{\def#1{\operatorname{#2}}} % to make
%------------------------------------------------
% Numerical invariants of rings, ideals, and modules
%
\opn\chara{char}
\opn\length{\ell}
\opn\pd{pd}
\opn\rk{rk}
\opn\projdim{proj\,dim}
%\opn\injdim{inj\,dim}
\opn\rank{rank}
\opn\depth{depth}
\opn\grade{grade}
%\opn\height{height}
\opn\height{ht}
\opn\embdim{emb\,dim}
\opn\codim{codim}
\def\OO{\mathcal{O}}
\opn\Tr{Tr}
\opn\bigrank{big\,rank}
\opn\superheight{superheight}\opn\lcm{lcm}
\opn\trdeg{tr\,deg}%
\opn\reg{reg}
\opn\lreg{lreg}
%------------------------------------------------
% Divisors
%
%\opn\div{div}
\opn\Div{Div}
\opn\WDiv{WDiv}
\opn\cl{cl}
\opn\Cl{Cl}
%
%------------------------------------------------
% Subsets of the spectrum of a ring
%
\opn\Spf{Spf}
\opn\Spec{Spec}
\opn\Supp{Supp}
\opn\supp{supp}
\opn\Sing{Sing}
\opn\Ass{Ass}
\opn\Assh{Assh}
\opn\Min{Min}
\opn\Reg{Reg}
%
%------------------------------------------------
% Standard operations on ideals and modules
%
\opn\Ann{Ann}
\opn\Rad{Rad}
\opn\Soc{Soc}
\opn\Socle{Socle}
%
%------------------------------------------------
% Linear algebra and homology, endo- and automorphisms
%
\opn\Ker{Ker}
\opn\Coker{Coker}
\opn\Im{Im}
\opn\Hom{Hom}
\opn\Tor{Tor}
\opn\Ext{Ext}
\opn\End{End}
\opn\Aut{Aut}
\opn\id{id}

\opn\nat{nat}
\opn\pff{pf}%   \pf exists already
\opn\Pf{Pf}
\opn\GL{GL}
\opn\SL{SL}
\opn\mod{mod}
\opn\ord{ord}
\opn\Proj{Proj}
%
%------------------------------------------------
% Convexity
%
\opn\aff{aff}
\opn\con{conv}
\opn\relint{relint}
\opn\st{st}
\opn\lk{lk}
\opn\cn{cn}
\opn\core{core}
\opn\vol{vol}
\opn\link{link}
\opn\star{star}
%------------------------------------------------
% Graded rings and Rees algebras
\opn\gr{gr}

%
%------------------------------------------------
% Polynomials and power series
%

\def\pot#1#2{#1[\kern-0.28ex[#2]\kern-0.28ex]}

%------------------------------------------------
% Direct and inverse limits
%
\opn\dirlim{\underrightarrow{\lim}}
\opn\inivlim{\underleftarrow{\lim}}
%
%
% Names with a meaning
%

\let\dirsum=\oplus
\let\tensor=\otimes
\let\iso=\cong

\let\mcone= * % Tentative (by TAKAYAMA)
%
%------------------------------------------------
%
\let\to=\rightarrow
%\def\namedTo#1{\buildrel\mbox{$#1$}\over\longrightarrow}

% By Takayama
\let\To=\longrightarrow

\def\Implies{\ifmmode\Longrightarrow \else
     \unskip${}\Longrightarrow{}$\ignorespaces\fi}
\def\implies{\ifmmode\Rightarrow \else
     \unskip${}\Rightarrow{}$\ignorespaces\fi}
\def\iff{\ifmmode\Longleftrightarrow \else
     \unskip${}\Longleftrightarrow{}$\ignorespaces\fi}

\let\:=\colon
\opn\H{H}
\opn\Pic{Pic}

\newtheorem{Theorem}{Theorem}
\newtheorem{Lemma}[Theorem]{Lemma}
\newtheorem{Corollary}[Theorem]{Corollary}
\newtheorem{Proposition}[Theorem]{Proposition}

\newtheorem{Remark}[Theorem]{Remark}

%
% We like the var forms of some greek letters (as taught in German schools)
%
\let\epsilon\varepsilon
%\let\phi=\varphi
%\let\kappa=\varkappa
%
%           We print on A4 paper
%
\textwidth=15cm
\textheight=22cm
\topmargin=0.5cm
\oddsidemargin=0.5cm
\evensidemargin=0.5cm
\pagestyle{plain}
%\parindent=1cm
%\parskip=\baselineskip
%
%           The pf environment of AMSART needs a little help
%
%\def\qed{\ifhmode\textqed\fi
%  \ifmmode\ifinner\quad\qedsymbol\else\dispqed\fi\fi}
%\def\textqed{\unskip\nobreak\penalty50
%   \hskip2em\hbox{}\nobreak\hfil\qedsymbol
%  \parfillskip=0pt \finalhyphendemerits=0}
%\def\dispqed{\rlap{\qquad\qedsymbol}}
%\def\noqed{\def\qed{\relax}}
%
% ------    END OF GENERAL MACROS    -------
%
% ------    MACROS FOR THIS ARTICLE  -------
%

%\def\FF{{\cal F}}

\def\OO{{\cal O}} 

\opn\inii{in}
\opn\inim{inm}
\opn\set{set}
\opn\sgn{sgn}

\def\pnt{{\raise0.5mm\hbox{\large\bf.}}}

\begin{document}

\title{An example of non-simply connected non-liftable Calabi-Yau 3-fold
in positive characteristic}

\author{Yukihide Takayama}
\address{Yukihide Takayama, Department of Mathematical
Sciences, Ritsumeikan University, 
1-1-1 Nojihigashi, Kusatsu, Shiga 525-8577, Japan}
\email{takayama@se.ritsumei.ac.jp}
% \date{June 17 in 2015, revised June 27 in 2015}

%%%% A4 format
%\setlength{\topsep}{0cm}
%\setlength{\parsep}{0cm}
%\setlength{\itemsep}{0cm}
\def\Coh#1#2{H_{\mm}^{#1}(#2)}
\def\eCoh#1#2#3{H_{#1}^{#2}(#3)}

\newcommand{\AppTh}{Theorem~\ref{approxtheorem} }
\def\da{\downarrow}
\newcommand{\ua}{\uparrow}
\newcommand{\namedto}[1]{\buildrel\mbox{$#1$}\over\rightarrow}
\newcommand{\bdel}{\bar\partial}
\newcommand{\proj}{{\rm proj.}}

\maketitle

\newenvironment{myremark}[1]{{\bf Note:\ } \dotfill\\ \it{#1}}{\\ \dotfill
{\bf Note end.}}
\newcommand{\transdeg}[2]{{\rm trans. deg}_{#1}(#2)}
\newcommand{\mSpec}[1]{{\rm m\hbox{-}Spec}(#1)}

\newcommand{\tbf}{{{\Large To Be Filled!!}}}

\pagestyle{plain}
\maketitle
%\newpage
\def\gCoh#1#2#3{H_{#1}^{#2}\left(#3\right)}
\def\subsetneq{\raisebox{.6ex}{{\small $\; \underset{\ne}{\subset}\; $}}}

%\tableofcontents

\begin{abstract}
  We show an example of non-simply connected non-liftable Calabi-Yau threefold
  over an algebraically closed field of characteristic $3$.
  It is constructed from a simply connected example by S.~Schr\"oer.
\end{abstract}

\section{Introduction}

Let $k$ be an algebraically closed field of positive characteristic
$p>0$. A smooth projective variety $X$ over $k$ is called Calabi-Yau if it has
trivial canonical bundle $\omega_X\iso \OO_X$ and $H^i(X,\OO_X)=0$ for
$i\ne 0, \dim X$.

A projective variety $X$ over $k$ is called
projectively liftable
to characteristic $0$
 (liftable, for short) if there exists a
projective scheme
\begin{equation*}
  {\frk X}\To \Spec R
  \end{equation*}
over a complete discrete valuation ring $(R,\mm)$ of characteristic
$0$ such that $k=R/\mm$ and $X\iso
{\frk X}\tensor_{R} k$.
It is known that Calabi-Yau varieties of dimension
$\leq 2$ are liftable, but for dimension~3 several non-liftable
examples have been found so far.

Examples of non-liftable 
Calabi-Yau threefolds by Hirokado, Ito, Saito
and Schr\"oer \cite{H99, H01, HISark, HISmanuscripta, Schr04} are all
simply connected and non-liftability is caused by vanishing the third
Betti number $b_3(X)=0$, which never holds in characteristic $0$.
The examples by Cynk and
van~Straten~\cite{CyvS} are double cover of $\PP^3$ ramified at an
arrangement of 8 planes of characteristic $p=3,5$. The third Betti
number is non-trivial at least for $p=5$. The author is not aware
whether the examples by Cynk and van Straten are simply connected
or the third Betti number vanishes for $p=3$.

In this paper, we consider a positive characteristic version of
Beauville's method to construct non-simply connected Calabi-Yau
threefold over $\CC$. This method virtually allow us to produce many non-simply
connected non-liftable Calabi-Yau threefolds with trivial third Betti number,
provided that we find suitable finite group action.

As an instance we give an example of non-liftable Calabi-Yau threefold
produced from the example by Schr\"oer in characteristic $p=3$.

%%%%%%%%%%%%%%%%%%%%%%%%%%%%%%%%%%%
%%%%%%%%%%%%%%%%%%%%%%%%%%%%%%%%%%%
%%%%%%%%%%%%%%%%%%%%%%%%%%%%%%%%%%%
\section{Calabi-Yau \'etale covers}
%%%%%%%%%%%%%%%%%%%%%%%%%%%%%%%%%%%
%%%%%%%%%%%%%%%%%%%%%%%%%%%%%%%%%%%
%%%%%%%%%%%%%%%%%%%%%%%%%%%%%%%%%%%

In this section, we fix an algebraically closed field $k$ of positive
characteristic $p>0$. For a sheaf ${\cal F}$ over a  variety $X$,
we denote $\dim_k H^i(X, {\cal F})$  by $h^i(X, {\cal F})$.
Also $h^{ij}(X)$ denotes the Hodge number $\dim_k H^j(X, \Omega^i_{X/k})$.

Beauville~\cite{BeauCY} constructed an example of non-simply connected
Calabi-Yau threefold over $\CC$ using free action of finite group.
This idea is also used in \cite{BD08}
for constructing many examples of non-simply connected
Calabi-Yau threefolds over $\CC$ from Schoen type Calabi-Yau threefold.
Here we give a slightly extended version of Beauville's method in
positive characteristic.

\begin{Lemma}
  \label{leray}
  Let $d$ be a positive integer
  with $(d, p)=1$,
  $f:X\to Y$ a finite \'{e}tale cover of degree $d$,
  where $X$ and $Y$ are varieties,
  and $F$ a quasi-coherent sheaf on $Y$. Then
  $\dim_kH^i(Y,F)\leq \dim_k H^i(X, f^*F)$.
\end{Lemma}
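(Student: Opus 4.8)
The plan is to use the Leray spectral sequence for the finite étale morphism $f\colon X\to Y$ together with the fact that, in characteristic prime to the degree, the structure sheaf $\OO_X$ (viewed via $f_*$) contains $\OO_Y$ as a direct summand. Concretely, I would first analyze $f_*f^*F$. Since $f$ is finite, it is affine, so $R^qf_* (f^*F)=0$ for $q>0$ and the Leray spectral sequence degenerates to give the isomorphism $H^i(X,f^*F)\iso H^i(Y, f_*f^*F)$ for every $i$. The projection formula then identifies $f_*f^*F$ with $F\tensor_{\OO_Y} f_*\OO_X$, since $F$ is quasi-coherent and $f$ is finite (in particular $f_*\OO_X$ is locally free of rank $d$ because $f$ is finite étale). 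Thus the whole problem is reduced to comparing $H^i(Y,F)$ with $H^i(Y, F\tensor_{\OO_Y} f_*\OO_X)$.

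\medskip

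The key step, which is also where the hypothesis $(d,p)=1$ enters, is to exhibit $F$ as a direct summand of $F\tensor_{\OO_Y} f_*\OO_X$. For this I would use the trace (or normalized trace) map. There is a natural $\OO_Y$-algebra unit $\OO_Y\to f_*\OO_X$, and the trace of the finite locally free morphism gives an $\OO_Y$-linear map $\operatorname{tr}\colon f_*\OO_X\to \OO_Y$; the composite $\OO_Y\to f_*\OO_X\namedTo{\operatorname{tr}}\OO_Y$ is multiplication by $d$ (the rank), which is invertible in $\OO_Y$ precisely because $(d,p)=1$ and $k$ has characteristic $p$. Hence $\tfrac{1}{d}\operatorname{tr}$ splits the unit, so $\OO_Y$ is an $\OO_Y$-module direct summand of $f_*\OO_X$. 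Tensoring this split injection with $F$ over $\OO_Y$ (which preserves split injections) shows that $F$ is a direct summand of $F\tensor_{\OO_Y} f_*\OO_X=f_*f^*F$.

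\medskip

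Finally I would pass to cohomology. Since $H^i(Y,-)$ is an additive functor, applying it to the split injection $F\hookrightarrow f_*f^*F$ yields a split injection $H^i(Y,F)\hookrightarrow H^i(Y, f_*f^*F)\iso H^i(X, f^*F)$. Taking $k$-dimensions gives exactly $\dim_k H^i(Y,F)\leq \dim_k H^i(X,f^*F)$, as desired.

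\medskip

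The main obstacle I anticipate is the splitting of the trace: one must be careful that the trace map of a finite locally free (here finite étale, hence locally free) morphism is genuinely $\OO_Y$-linear and that its composite with the unit is multiplication by the rank $d$. The characteristic hypothesis $(d,p)=1$ is exactly what makes $d$ a unit in $k$, and hence in $\OO_Y$, so this is the only place the assumption is used; the degeneration of the Leray spectral sequence and the projection formula are formal once $f$ is known to be finite.
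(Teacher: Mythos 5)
Your proposal is correct and follows essentially the same route as the paper: both arguments split the unit $\OO_Y\to f_*\OO_X$ via the trace map (using $(d,p)=1$ to invert $d$), apply the projection formula to exhibit $F$ as a direct summand of $f_*f^*F$, and use the identification $H^i(X,f^*F)\iso H^i(Y,f_*f^*F)$ coming from finiteness of $f$. The only difference is presentational: you spell out the vanishing of $R^qf_*$ and the Leray degeneration explicitly, and you note that $f_*\OO_X$ is locally free of rank $d$, points the paper leaves implicit.
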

\begin{proof}
  Since $f$ is surjective, we have a canonical injection
  $f^\sharp:
  \OO_Y \hookrightarrow f_*\OO_X$.
  On the other hand, since $d$ is a unit in the base field, we have
  the trace map $Tr: f_*\OO_X = f_*f^*\OO_Y\To \OO_Y$ whose composition
  with the canonical morphism $\psi: \OO_Y\to f_*f^*\OO_Y$ yields
  multiplication map by $d$: $\OO_Y\to f_*f^*\OO_Y \overset{Tr}{\to}\OO_Y$.
%  (cf. Lemma~V.1.12~\cite{Milne}).
  Thus we have a split exact sequence
  \begin{equation*}
    0\To \OO_Y \To f_*\OO_X \To \Coker\psi\To 0
  \end{equation*}
  hence $f_*\OO_X \iso \OO_Y\dirsum \Coker {\psi}$
  and we have $f_*f^*F \iso (f_*\OO_X)\tensor F = F\dirsum L$,
  with $L:= \Coker\psi\tensor F$ by projection formula.
  Now since $f$ is a finite morphism, we have 
  $H^i(X, f^*F) \iso H^i(Y, f_*f^*F) \iso H^i(Y, F)\dirsum H^i(Y,L)$
  and we obtain the required result.
\end{proof}

\begin{Theorem}[cf. Beauville~\cite{BeauCY}]
  \label{beauvillePC}
%  Let $k$ be an algebraically closed field of characteristic
%  $p>0$.
  For an odd integer $n\geq 3$ and a Calabi-Yau $n$-fold $X$,
  if there exists a finite \'{e}tale cover $f:X\to Y$
  of degree $d$ prime to $p$ 
  to a projective variety $Y$,
  then $Y$ is also Calabi-Yau.
\end{Theorem}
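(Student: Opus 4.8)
The plan is to verify the three defining properties of a Calabi--Yau variety for $Y$: smoothness with $\dim Y=n$, the vanishing $h^i(Y,\OO_Y)=0$ for $i\ne 0,n$, and triviality of $\omega_Y$. Smoothness is the easiest point: since $f$ is finite, flat and surjective it is faithfully flat, and smoothness descends along faithfully flat morphisms of finite presentation, so the smoothness of $X$ over $k$ forces $Y$ to be smooth over $k$; moreover $\dim Y=\dim X=n$ because $f$ is finite, and as $Y$ is a projective variety it is integral, so $h^0(Y,\OO_Y)=1$. The cohomological vanishing is then exactly what Lemma~\ref{leray} is designed to supply: applying it with $F=\OO_Y$ and using $f^*\OO_Y\iso\OO_X$ gives $h^i(Y,\OO_Y)\le h^i(X,\OO_X)$, which vanishes for $0<i<n$ because $X$ is Calabi--Yau (and for $i>n$ for dimension reasons). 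So it remains to handle $i=n$ together with the canonical bundle.

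For the canonical bundle I would first show that $\omega_Y$ is torsion. Since $f$ is \'etale, $\Omega_{X/Y}=0$ and hence $\Omega_{X/k}\iso f^*\Omega_{Y/k}$; taking top exterior powers gives $\omega_X\iso f^*\omega_Y$, so $f^*\omega_Y\iso\OO_X$. Pushing forward and using the projection formula yields $\omega_Y\tensor f_*\OO_X\iso f_*\OO_X$; taking determinants of these rank $d$ bundles and cancelling $\det(f_*\OO_X)$ gives $\omega_Y^{\tensor d}\iso\OO_Y$. Thus $\omega_Y$ is a torsion line bundle.

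The main point --- and the only place where the oddness of $n$ enters --- is to upgrade ``torsion'' to ``trivial''. For this I would invoke multiplicativity of the Euler characteristic under the finite \'etale cover, $\chi(X,\OO_X)=d\cdot\chi(Y,\OO_Y)$, which follows from $\mathrm{td}(X)=f^*\mathrm{td}(Y)$ and Grothendieck--Riemann--Roch and is valid in any characteristic. Because $n$ is odd and $X$ is Calabi--Yau, $h^0(X,\OO_X)=h^n(X,\OO_X)=1$ (the latter by Serre duality), so $\chi(X,\OO_X)=1+(-1)^n=0$ and hence $\chi(Y,\OO_Y)=0$. Combining this with the vanishing already proved for $Y$ and with $h^0(Y,\OO_Y)=1$, and using Serre duality $h^n(Y,\OO_Y)=h^0(Y,\omega_Y)$, I obtain $h^0(Y,\omega_Y)=1$ (which simultaneously settles the leftover case $i=n$). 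Finally, a nonzero section of the torsion bundle $\omega_Y$ corresponds to an effective divisor $D$ with $dD\sim 0$; on the projective integral variety $Y$ this forces $D=0$, hence $\omega_Y\iso\OO_Y$, and $Y$ is Calabi--Yau.

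The step I expect to be the real obstacle is producing the section of $\omega_Y$, i.e.\ passing from $\omega_Y^{\tensor d}\iso\OO_Y$ to $\omega_Y\iso\OO_Y$; everything else is formal. This is precisely where the hypothesis that $n$ is odd is indispensable, since it is what makes $\chi(X,\OO_X)=0$. For even $n$ the same computation gives $\chi(X,\OO_X)\ne 0$ and the argument breaks down, in line with the fact that K3 surfaces admit free involutions whose quotients (Enriques surfaces) carry a nontrivial $2$-torsion canonical bundle.
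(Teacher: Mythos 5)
Your proof is correct, and its skeleton coincides with the paper's: both invoke Lemma~\ref{leray} with $F=\OO_Y$ to kill $h^i(Y,\OO_Y)$ for $i\ne 0,n$, then use multiplicativity of the Euler characteristic together with $\chi(\OO_X)=1+(-1)^n=0$ (the only place where oddness of $n$ enters) and Serre duality to produce a nonzero section $\sigma\in H^0(Y,\omega_Y)$. (Incidentally, you state the multiplicativity in the correct direction, $\chi(\OO_X)=d\cdot\chi(\OO_Y)$; the paper writes $\chi(\OO_Y)=\deg(f)\cdot\chi(\OO_X)$, a harmless slip since both sides vanish.) The two arguments diverge only in the endgame. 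The paper finishes more directly: since $f$ is \'{e}tale, $f^*\omega_Y\iso\omega_X\iso\OO_X$, so the pullback $f^*\sigma$ is a nonzero global section of $\OO_X$, hence a nonzero constant, hence nowhere vanishing; surjectivity of $f$ then forces $\sigma$ itself to be nowhere vanishing, and a line bundle with a nowhere-vanishing section is trivial. You instead first prove $\omega_Y^{\tensor d}\iso\OO_Y$ (via the projection formula and determinants of the rank-$d$ bundle $f_*\OO_X$) and then argue that a nonzero section of a torsion line bundle gives an effective divisor $D$ with $dD\sim 0$, which must vanish on a projective integral variety. Both endgames are valid; yours costs an extra step (the torsion statement, which the paper never needs) but isolates a reusable general fact, while the paper's pullback trick gets triviality in one stroke from the same section. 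In particular, the step you flagged as ``the real obstacle'' --- upgrading torsion to trivial --- is in fact dispensable: once $\sigma$ exists, pulling it back along $f$ settles everything.
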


\begin{proof}
  Since $f$ is finite \'{e}tale of degree $d$, $(d,p)=1$,
  and $h^i(X, \OO_X)=0$ for $i\ne{0, n}$, 
  we have $h^i(Y, \OO_Y)=0$ for $i\ne{0,n}$ by Lemma~\ref{leray}.
  Moreover,
  since $f$ is \'{e}tale and $X$ is smooth projective,
  $Y$ is also smooth projective.
%  \footnote{$f:X\To Y$ (finite) \'{e}tale,  $X$ projective,
%    ならば、$Y$もprojetiveというのは、本当か？それなら$X$にfree actionする
%    有限群$G$をとれば、$X\To X/G$はfinite \'{e}taleで、$X/G$も自動的に
%    projectiveになるはずではないか。}
 %
  Now since $X$ is Calabi-Yau and $n$ is odd, we have $\chi(\OO_X)=0$
  and
  $h^0(Y, \OO_Y) - h^n(Y, \OO_Y)=\chi(\OO_Y)
%  $\dim_k H^0(\OO_Y) - \dim_kH^{n}(\OO_Y)
  = \deg(f)\cdot \chi(\OO_X) =0$
  so that $h^n(Y,\OO_Y) = h^0(Y,\OO_Y)=1$
%  $\dim_kH^n(\OO_Y)=\dim_k H^0(\OO_Y)=1$
  since $k$
  is algebraically closed. By Serre duality, we have
  $h^0(Y, \omega_Y)=1$, so that we can take a non-trivial global section
  $\sigma\in H^0(\omega_Y)$. Then its pullback
  $f^*\sigma = \sigma\circ f  \in H^0(\omega_X)$ is everywhere nonzero.
  Since $f$ is surjective, $\sigma$ is also
  everywhere nonzero, which means that $\omega_Y \iso \OO_Y$.
\end{proof}

Note that,
Theorem~\ref{beauvillePC} does not hold
for even-dimensional Calabi-Yau variety.
In fact, following the discussion of the proof of
Theorem~\ref{beauvillePC},
we have
  \begin{equation*}
    1 + \dim_kH^0(\omega_Y) = \chi(\OO_Y) = \deg f\cdot \chi(\OO_X) =2 \deg(f)
  \end{equation*}
Hence, we have $\omega_Y\iso \OO_Y$ only if $\deg(f)=1$, i.e., $X\iso Y$.

\begin{Corollary}
  \label{construction}
  Let $n \geq 3$ be an odd integer.
  If $X$ is simply connected Calabi-Yau $n$-fold and
  $Y = X/G$ is projective, where $G$ is a finite group
  with $\sharp{G}=d$ and $(d,p)=1$ acting on $X$ freely.
  Then $Y$ is Calabi-Yau
  with the algebraic fundamental group $\pi_1^{alg}(Y)= G$.
\end{Corollary}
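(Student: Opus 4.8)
The plan is to realize the quotient map $f : X \to Y = X/G$ as a finite \'etale cover and then invoke Theorem~\ref{beauvillePC} directly. The only genuine input I need to supply is that $f$ is finite \'etale of degree $d = \sharp G$ prime to $p$; once that is in hand, the Calabi-Yau conclusion for $Y$ is immediate from the theorem, and the fundamental group statement follows from the general theory of \'etale covers.

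First I would establish that the geometric quotient $Y = X/G$ exists and that $f$ is a finite morphism. Since $X$ is projective (hence quasi-projective) and $G$ is a finite group acting on $X$, every $G$-orbit lies in an affine open subset, so the geometric quotient $Y = X/G$ exists as a projective variety with $f$ finite of degree $d = \sharp G$. (The hypothesis that $Y$ is projective is assumed in the statement, so I may take this for granted.) Next I would verify that $f$ is \'etale. Because $G$ acts freely, every point of $X$ has trivial stabilizer, so $f$ is unramified; combined with flatness of the finite map $f$ over the smooth base this gives that $f : X \to Y$ is a finite \'etale cover of degree $d$. Here the condition $(d,p)=1$ is what I must not lose sight of: although freeness already forces the cover to be \'etale, the coprimality to $p$ is exactly the hypothesis needed to apply Theorem~\ref{beauvillePC}, where the trace map argument of Lemma~\ref{leray} requires $d$ to be a unit in $k$.

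With $f : X \to Y$ a finite \'etale cover of degree $d$ prime to $p$, a Calabi-Yau $n$-fold $X$ mapping to a projective variety $Y$, and $n \geq 3$ odd, Theorem~\ref{beauvillePC} applies verbatim and yields that $Y$ is Calabi-Yau. It remains to identify the algebraic fundamental group. Since $X$ is simply connected, $\pi_1^{alg}(X)$ is trivial, so $X$ has no nontrivial connected \'etale covers; thus $f : X \to Y$ is the universal cover of $Y$ in the \'etale sense, and $Y$ is covered by its own universal cover with deck transformation group $G$. By Grothendieck's theory of the \'etale fundamental group, a connected finite \'etale Galois cover $X \to Y$ with deck group $G$ and simply connected total space $X$ realizes $X$ as the universal cover of $Y$, whence $\pi_1^{alg}(Y) \iso G$.

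The main obstacle I anticipate is not the Calabi-Yau conclusion, which is a clean application of the theorem, but rather making the passage from a \emph{free} group action to a \emph{Galois \'etale} cover fully rigorous: one must check that the quotient map is genuinely \'etale (and not merely unramified) and that the cover is Galois with group $G$, which is where the freeness of the action is used in an essential way. A secondary point requiring care is confirming that $X$ connected and simply connected forces $f$ to be the universal cover, so that the identification $\pi_1^{alg}(Y) \iso G$ is exact rather than merely a surjection onto $G$.
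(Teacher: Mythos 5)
Your proposal is correct and is essentially the argument the paper intends: the paper states this Corollary without proof, treating it as immediate from Theorem~\ref{beauvillePC} once one notes that the quotient map $X \to X/G$ by a free action of a finite (constant) group is a finite \'etale Galois cover of degree $d=\sharp G$, with $\pi_1^{alg}(Y)\iso G$ following from the simple connectedness of $X$ and Grothendieck's theory of the \'etale fundamental group. Your filling-in of these routine steps (\'etaleness from freeness, coprimality $(d,p)=1$ entering only through the trace argument of Lemma~\ref{leray}, and the Galois-cover identification of $\pi_1^{alg}(Y)$) matches the paper's implicit reasoning exactly.
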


Notice that we assume Calabi-Yau variety to be projective and
existence of \'{e}tale cover $f:X\To Y$ with $X$ projective
does not necessarily imply $Y$ to be projective.
Hence we have to assume projectivity
of the quotient $Y=X/G$ in Cor.~\ref{construction}.

There are Calabi-Yau threefolds $X$ in positive characteristic
with the third Betti number $b_3(X):= \dim_{\QQ_\ell}H^3_{et}(X, \QQ_\ell)=0$,
with $\ell\ne p$, \cite{H99,Schr04,HISark,HISmanuscripta}.
Since the Betti numbers are preserved in deformation and,
by Hodge decomposition, 
a Calabi-Yau threefold in characteristic~0 always has non-trivial third Betti
number, Calabi-Yau threefolds $X$ in positive characteristic
with $b_3(X)=0$ cannot be liftable to characteristic~$0$.
Now we have

\begin{Corollary}
  \label{concrete}
  Let $X$ be a non-liftable Calabi-Yau threefold $X$ over an
  algebraically closed field
  $k$ of $\chara(k)=p>0$ with $b_3(X)=0$
  and $f:X\to Y$ a finite \'{e}tale cover of a projective
  variety $Y$ with $\deg{f} = d$, $(d,p)=1$.
  Then $Y$ is also a non-liftable Calabi-Yau threefold
  with $b_i(Y)\leq b_i(X)$ for all $i$
  and $h^{ij}(Y)\leq h^{ij}(X)$ for all $i,j$.
  If, furthermore, $X$ is simply connected and
  $Y = X/G$ with $G$ a finite group acting freely on $X$
  with $\sharp G$ prime to $p$, then $Y$ is non-simply connected
  with $\pi_1^{alg}(Y)=G$.
\end{Corollary}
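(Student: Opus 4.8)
The plan is to break the statement into four independent assertions and settle each one by invoking the results already proved, the only new input being an étale-cohomology transcription of the trace argument from Lemma~\ref{leray}.

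First I would show that $Y$ is a Calabi-Yau threefold. Since $X$ is a Calabi-Yau threefold, $n=3$ is odd and $\geq 3$, and $f:X\to Y$ is a finite \'etale cover of degree $d$ prime to $p$ onto the projective variety $Y$, Theorem~\ref{beauvillePC} applies verbatim and gives that $Y$ is Calabi-Yau. Next I would handle the Hodge inequalities $h^{ij}(Y)\leq h^{ij}(X)$. Because $f$ is \'etale, the relative differentials vanish, $\Omega^1_{X/Y}=0$, so the cotangent exact sequence collapses to an isomorphism $f^*\Omega^1_{Y/k}\iso\Omega^1_{X/k}$; taking $i$-th exterior powers and using that pullback commutes with $\wedge^i$ yields $f^*\Omega^i_{Y/k}\iso\Omega^i_{X/k}$ for every $i$. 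Applying Lemma~\ref{leray} with the quasi-coherent sheaf $F=\Omega^i_{Y/k}$ then gives $h^{ij}(Y)=\dim_k H^j(Y,\Omega^i_{Y/k})\leq \dim_k H^j(X,f^*\Omega^i_{Y/k})=\dim_k H^j(X,\Omega^i_{X/k})=h^{ij}(X)$.

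For the Betti inequalities $b_i(Y)\leq b_i(X)$ I would rerun the argument of Lemma~\ref{leray}, but in the \'etale topology with $\QQ_\ell$-coefficients ($\ell\ne p$) in place of the coherent setting. Since $f$ is finite \'etale of degree $d$ with $d$ invertible in $\QQ_\ell$, the composite of the unit $\QQ_\ell\to f_*f^*\QQ_\ell$ with the trace $f_*f^*\QQ_\ell\to\QQ_\ell$ is multiplication by $d$, hence an isomorphism; as $f^*\QQ_\ell=\QQ_\ell$, this exhibits the constant sheaf $\QQ_\ell$ on $Y$ as a direct summand of $f_*\QQ_\ell$. Because $f$ is finite we have $R^qf_*=0$ for $q>0$, so $H^i_{et}(X,\QQ_\ell)\iso H^i_{et}(Y,f_*\QQ_\ell)$ contains $H^i_{et}(Y,\QQ_\ell)$ as a direct summand, giving $b_i(Y)\leq b_i(X)$ for all $i$.

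Finally I would deduce non-liftability and the fundamental group. Taking $i=3$ above together with the hypothesis $b_3(X)=0$ forces $b_3(Y)\leq b_3(X)=0$, so $b_3(Y)=0$; as recalled just before the statement, a Calabi-Yau threefold with vanishing third Betti number cannot lift to characteristic $0$, so $Y$ is non-liftable. For the last assertion, when $X$ is simply connected and $Y=X/G$ with $G$ finite acting freely and $\sharp G$ prime to $p$, Corollary~\ref{construction} gives directly that $Y$ is Calabi-Yau with $\pi_1^{alg}(Y)=G$, and since $G$ is nontrivial, $Y$ is non-simply connected. The only genuinely new ingredient beyond the lemmas already established is the \'etale-cohomology version of the trace splitting used for the Betti bound, and I expect that transcription to be the main point requiring care.
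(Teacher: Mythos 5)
Your proposal is correct and follows essentially the same route as the paper: Lemma~\ref{leray} applied to $F=\Omega^i_{Y/k}$ (using $f^*\Omega^i_{Y/k}\iso\Omega^i_{X/k}$ since $f$ is \'etale) for the Hodge bounds, the trace map in $\ell$-adic cohomology for the Betti bounds (the paper cites Milne's Lemma~V.1.12 for the cohomology-level trace, while you rederive the splitting at the sheaf level via $R^qf_*=0$ --- the same idea), and Theorem~\ref{beauvillePC} together with Corollary~\ref{construction} for the Calabi-Yau property and $\pi_1^{alg}(Y)=G$. The only cosmetic difference is that you spell out steps the paper leaves implicit.
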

\begin{proof}
  $\QQ_{\ell}$ is a constant sheaf so that $f^*\QQ_\ell = \QQ_\ell$.
  Moreover, since $f:X\to Y$ is \'{e}tale, we have
  $f^*\Omega_{Y/k}^i \iso \Omega_{X/k}^i$
%  \footnote{See for example, Proposition~2.5.3 of
%    ``Etale Cohomology Theory'' by Lei Fu.
%  }
  for all $i$.
  Hence by Lemma~\ref{leray} we have
  $h^{ij}(Y)\leq h^{ij}(X)$.
  Also for the \'{e}tale sheaf $\QQ_\ell$, we have a composite map
  \begin{equation*}
    H^i_{et}(Y, \QQ_\ell) \To H^i_{et}(X, \QQ_\ell)
    \overset{Tr}{\To} H^i_{et}(Y,\QQ_\ell)
  \end{equation*}
  where $Tr$ is the trace map (cf. Lemma~V.1.12~\cite{Milne}).
  This map is multiplication by $d$, which is invertible
  since  $(d,p)=1$.
  Thus we have 
  $\dim_{\QQ_\ell}H^i_{et}(Y, \QQ_\ell)\leq \dim_{\QQ_\ell}H^i(X, \QQ_\ell)$.
  In particular we have $H^3_{et}(Y, \QQ_\ell)=0$
   from $H^3_{et}(X, \QQ_{\ell})=0$.
\end{proof}

%%%%%%%%%%%%%%%%%%%%%%%%%%%%%%%%%%%%%%%%%%%%%%%%%%%%%%%%%%%%%%%%%%
%%%%%%%%%%%%%%%%%%%%%%%%%%%%%%%%%%%%%%%%%%%%%%%%%%%%%%%%%%%%%%%%%%
%%%%%%%%%%%%%%%%%%%%%%%%%%%%%%%%%%%%%%%%%%%%%%%%%%%%%%%%%%%%%%%%%%
\section{Non-liftable CY 3-fold with fundamental group $\ZZ/2\ZZ$}
%%%%%%%%%%%%%%%%%%%%%%%%%%%%%%%%%%%%%%%%%%%%%%%%%%%%%%%%%%%%%%%%%%
%%%%%%%%%%%%%%%%%%%%%%%%%%%%%%%%%%%%%%%%%%%%%%%%%%%%%%%%%%%%%%%%%%
%%%%%%%%%%%%%%%%%%%%%%%%%%%%%%%%%%%%%%%%%%%%%%%%%%%%%%%%%%%%%%%%%%

The aim of this section is to show
the following result using the method given in the previous section.

\begin{Theorem}
  \label{main}
  There exists an example of non-liftable Calabi-Yau threefold
  $X$ over an algebraically
closed field of characteristic $3$ such that $b_3(X)=0$ and $\pi_1^{alg}(X)
= \ZZ/2\ZZ$.
\end{Theorem}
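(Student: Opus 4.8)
The plan is to produce $X$ as a free quotient of Schr\"oer's simply connected example and then invoke Corollary~\ref{concrete}, so that all the hard input (non-liftability, $b_3=0$, the Calabi--Yau property) is imported from Schr\"oer's example and the only genuinely new work is the construction of a suitable group action. Write $Z$ for the simply connected non-liftable Calabi--Yau threefold constructed by Schr\"oer in \cite{Schr04} over an algebraically closed field $k$ with $\chara(k)=3$; by loc.\ cit.\ it satisfies $b_3(Z)=0$, which is exactly the property that both forces non-liftability and is inherited by \'etale quotients. Granting a finite group $G=\ZZ/2\ZZ$ acting freely on $Z$ with \emph{projective} quotient $X:=Z/G$, the theorem is immediate: $\sharp G=2$ is prime to $p=3$, so Corollary~\ref{concrete} (applied with its $X$ taken to be our $Z$ and its $Y$ our $X$) yields at one stroke that $X$ is a non-liftable Calabi--Yau threefold with $b_3(X)\le b_3(Z)=0$ and $\pi_1^{alg}(X)=\ZZ/2\ZZ$. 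Thus the proof reduces to exhibiting and analyzing an involution $\iota\colon Z\To Z$.

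First I would unwind Schr\"oer's construction of $Z$ far enough to expose an explicit order-two geometric symmetry of the data defining it, and use it to define the candidate $\iota$. I would then check that $\iota$ is a genuine Calabi--Yau automorphism, i.e.\ that it respects the trivialization $\omega_Z\iso\OO_Z$; because $n=3$ is odd we lie in the range where Theorem~\ref{beauvillePC} applies and the sign obstruction that kills the even-dimensional case (see the remark following that theorem) does not arise.

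The main obstacle is to prove that $\iota$ is \emph{fixed-point-free}. This is indispensable: a single fixed point would make $Z\To Z/\langle\iota\rangle$ ramified, destroying both the \'etaleness required by Corollary~\ref{concrete} and the identity $\pi_1^{alg}(X)=\ZZ/2\ZZ$. Since $\ord(\iota)=2$ is prime to $3$, the action linearizes at any fixed point, so the fixed locus, if nonempty, would be smooth; I would nonetheless rule out fixed points directly on the explicit model by showing the equations $\iota(z)=z$ have no solution on $Z$. I expect the naive Lefschetz/Euler-characteristic shortcut familiar over $\CC$ to be awkward here precisely because $b_3(Z)=0$ makes the topological count degenerate, so I would keep the verification geometric and argue with the explicit equations rather than with an $\ell$-adic trace formula.

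Finally I would establish that $X=Z/\langle\iota\rangle$ is projective. As noted after Corollary~\ref{construction}, this step cannot be skipped, since an \'etale cover with projective source need not have projective target. With $Z$ projective and $G$ finite acting freely, it suffices to descend an ample class: averaging an ample line bundle over the order-two group (using that $2$ is invertible) produces a $G$-invariant ample bundle, which descends to an ample bundle on the quotient. With projectivity in hand, Corollary~\ref{concrete} applies verbatim and completes the proof. The freeness of $\iota$ is the crux; projectivity and the Calabi--Yau bookkeeping should follow formally once the action on $Z$ is understood explicitly.
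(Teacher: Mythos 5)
Your global strategy coincides with the paper's: realize the desired threefold as the quotient of Schr\"oer's example by a free involution of order prime to $p=3$ and then quote Corollary~\ref{concrete}. But your proposal postpones precisely the two steps in which all the mathematical content of Theorem~\ref{main} resides --- exhibiting the involution and proving that it acts freely --- and these cannot be waved through. The paper's answer is Lieberman's Enriques involution: writing $A=E_1\times E_2$ for the superspecial abelian surface underlying Schr\"oer's construction, one fixes a $2$-torsion point $a=(\alpha,\beta)\in A$ with $\alpha\neq 0$ and $\beta\neq 0$ (possible since $\chara k=3\neq 2$) and takes $\epsilon\colon (x,y)\mapsto(-x+\alpha,\,y+\beta)$, the composite $\sigma_R\sigma_K$ of the partial sign involution with translation by $a$. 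One must then check (i) that $\epsilon$ descends through the quotient by the moving infinitesimal subgroup $H\iso\alpha_3$ over $\PP^1$ and through the fiberwise Kummer construction, and (ii) that the descended involution is fixed-point free: the purely inseparable quotient by $H$ does not change geometric points, $\epsilon(z)=z$ is impossible because $\beta\neq 0$, $\epsilon(z)=-z$ is impossible because $\alpha\neq 0$, and $\epsilon$ permutes the sixteen exceptional divisors of each Kummer fiber without fixed divisor because on $2$-torsion it is translation by $a\neq 0$; fiberwise this is the classical fact that the quotient of a Kummer surface by $\sigma_R\sigma_K$ is an Enriques surface \cite{MN84,P80}. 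The choice of involution is the crux, not a detail: the obvious candidates all fail ($(x,y)\mapsto(-x,-y)$ becomes trivial on the Kummer quotient, $\sigma_R$ alone has fixed points, and translation by $a$ alone, though free on the abelian fibers, acquires fixed points on the Kummer surfaces at solutions of $2z=a$), and even the descent step (i) is delicate because $\sigma_R$ acts by $-1$ on the first $\alpha_3$-factor and so interacts nontrivially with the family $H_{[s:t]}=\{tx=sy\}$. As written, your text reduces the theorem to Corollary~\ref{concrete} --- a reduction the paper has already made --- and supplies none of the remaining proof.

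Two secondary points of comparison. Your projectivity step genuinely differs from the paper's and is essentially sound, but not quite as stated: $G$-invariance of the ``averaged'' bundle $L\tensor\iota^*L$ is not by itself a descent datum (and invertibility of $2$ plays no role here); what one actually uses is that the norm of $L$ along the finite \'etale degree-$2$ quotient $q\colon Z\to X$ satisfies $q^*\mathrm{Nm}(L)\iso L\tensor\iota^*L$, which is ample, and ampleness descends along finite surjective morphisms of proper schemes, so $\mathrm{Nm}(L)$ is ample on $X$. The paper instead proves that $\varphi\colon Y\to\PP^1$ is a projective morphism, extending an ample bundle from the generic fiber and using that the specialization maps $\Pic(Y_\eta)\to\Pic(Y_t)$ have finite cokernel; your route avoids the fibration structure altogether and is cleaner, provided the descent datum is made explicit. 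Finally, your step of checking that $\iota$ respects the trivialization $\omega_Z\iso\OO_Z$ is unnecessary: Theorem~\ref{beauvillePC} already yields $\omega_Y\iso\OO_Y$ for any odd-dimensional Calabi--Yau \'etale cover from the Euler-characteristic count, with no compatibility between the involution and a global $3$-form required.
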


We construct our example from
the example by Schr\"oer in characteristic$~3$ \cite{Schr04}.
The idea is to apply Liebermann's involution on schr\"oer variety.

%%%%%%%%%%%%%%%%%%%%%%%%%%%%%%%%%%%%%%%%%%%
\subsection{Review of Schr\"oer variety}
%%%%%%%%%%%%%%%%%%%%%%%%%%%%%%%%%%%%%%%%%%%

We first recall the construction of Schr\"oer variety in characteristic $p=3$.
It is obtained from a Moret-Bailly's fibration of supersingular
abelian surfaces over $\PP^1$ \cite{MB} by taking the desingularized
quotient of involution. Thus it is a fibration of Kummer surfaces
over $\PP^1$.

Precisely, let $k$ be an algebraically closed field of characteristic
$p=3$.
Take a superspecial abelian surface $A = E_1\times E_2$,
where $E_i$ are supersingular elliptic curves. As an abelian variety,
we have a subgroup scheme
$K(L):= \alpha_p\times\alpha_p\subset A$, where $\alpha_p$ is
the group scheme defined by 
$\alpha_p = \Spec k[X]/(X^p)$, and we take the product with
$\PP^1$: $K(L)\times {\PP^1} \subset A\times \PP^1$.
Now we have a subgroup scheme $H:=\alpha_p\times \PP^1 \subset K(L)\times\PP^1$
which is a group scheme of height one  corresponding
to $p$-Lie subalgebras
$\OO_{\PP^1}(-1)\subset \OO_{\PP^1}\dirsum\OO_{\PP^1} (\subset Lie(A\times\PP^1)$.
Explicitly, we have $K(L)\times \PP^1
=\Spec \OO_{\PP^1}[x,y]/(x^p,y^p)$
and $H \iso \Spec \OO_{\PP^1}[x,y]/(x^p,y^p, tx-sy)$
where $[s:t]$ is the projective coordinates of $\PP^1$.
Now take $\tilde{X}:= (A\times\PP^1)/H$ which is supersingular abelian
surface over $\PP^1$. Now by taking the fiberwise quotient by the involution
induced from $(x,y)\mapsto (-x,-y)$ on $A$ together with desingularization
we obtain the Kummer surface pencil $\pi:X\to \PP^1$, which is a 
Calabi-Yau threefolds with $b_3(X)=0$, i.e., non-liftable.

%%%%%%%%%%%%%%%%%%%%%%%%%%%%%%%%%%%%%%%%%%%%%%%%%%%%
\subsection{Liebermann's involution and our example}
%%%%%%%%%%%%%%%%%%%%%%%%%%%%%%%%%%%%%%%%%%%%%%%%%%%%

We recall Liebermann's involution (\cite{MN84, P80}).
Using the notation in the previous subsection,
let $a=(\alpha, \beta)\in A$ be a 2-torsion point
not lying on $E_1$ or $E_2$, i.e.,
$\alpha$ and $\beta$ are non-zero $2$-torsion points in
each $E_i$, $(i=1,2)$.
Let $\tilde{S}$ be a Kummer surface obtained from $A$
and $\sigma_R$ an involution of $\tilde{S}$
induced from the involution $(x,y)\mapsto (-x,y)$ on $A$.
Also let $\sigma_K$ be also an involution of $\tilde{S}$
induced from the translation by $a$:
$A\ni x \mapsto x + a \in A$.
Then $\epsilon := \sigma_R\sigma_K$ is a fixed point free
involution on $\tilde{S}$. It is explicitly described
at the level of $A$ by
\begin{equation*}
  (x,y)\mapsto (-x + \alpha, y + \beta).
\end{equation*}
Then the  quotient $\tilde{S}/\epsilon$
is an Enriques surface.

%%%%%%%%%%%%%%%%%%%%%%%%%%%%%%%%%%%%%%%%
%%%%%%%%%%%%%%%%%%%%%%%%%%%%%%%%%%%%%%%%
\subsection{Proof of Theorem~\ref{main}}
%%%%%%%%%%%%%%%%%%%%%%%%%%%%%%%%%%%%%%%%
%%%%%%%%%%%%%%%%%%%%%%%%%%%%%%%%%%%%%%%%

Now our example is obtained by applying $\epsilon$ not on
$A$ (and its lifting $A\times\PP^1$)
but $A/\alpha_p$ (and its lifting $A\times \PP^1/H$).
Notice that, since $\epsilon$ is involution and the action of $H$
on each fiber is
the translation by an order $p=3$ element, $\epsilon$ acts freely on
$A\times \PP^1/H$ and on the Schr\"oer variety obtained from this.
Then we have
\begin{equation*}
  \begin{CD}
    X       @>{f}>>   Y:= X/\epsilon\\
    @V{\pi}VV          @V{\varphi}VV  \\
    \PP^1   @=    \PP^1
    \end{CD}
  \end{equation*}
where $f$ is an \'{e}tale cover of degree~2 and
$\varphi$ is defined by $\varphi\circ f = \pi$.

Thus we have the following, from which Theorem~\ref{main} is immediate.

\begin{Proposition}
  For the Schr\"oer variety $X$ in characteristic~3,
  there exists an \'{e}tale cover $f:X\to Y$ of degree 2.
  $Y$ is a non-liftable Calabi-Yau threefold
  with $\pi_1^{alg}(Y) = \ZZ/2\ZZ$ and $b_1(Y)=b_3(Y)=b_5(Y)=0$ and
  $h^{ij}(Y) = h^{ji}(Y)=0$ for $(i,j)=(0,1), (0,2), (1,2), (1,3), (2,3)$.
\end{Proposition}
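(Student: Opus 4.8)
The plan is to deduce the entire statement from Corollary~\ref{concrete}, applied to the Schr\"oer variety $X$ together with the group $G=\langle\epsilon\rangle\iso\ZZ/2\ZZ$, and then to read off the individual Betti- and Hodge-number vanishings for $Y$ from the corresponding (known) invariants of $X$. So the bulk of the work is checking the hypotheses of that corollary; the conclusions are then formal.

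First I would verify the three hypotheses of Corollary~\ref{concrete}. The Schr\"oer variety $X$ is a simply connected non-liftable Calabi--Yau threefold with $b_3(X)=0$; this is its defining property (cf.\ \cite{Schr04} and the review above). The involution $\epsilon$ acts freely on $X$: as observed before the statement, a fixed point on the Schr\"oer variety built from $A\times\PP^1/H$ would force the order-$2$ map $\epsilon$ to coincide with a translation coming from the order-$3$ group scheme $H$ on some fibre, which is impossible. Hence $f\colon X\to Y=X/\epsilon$ is a finite \'etale cover of degree $d=2$, and since $p=3$ we have $(d,p)=(2,3)=1$. It remains to check that $Y$ is projective, which is genuinely needed because \'etaleness alone does not guarantee it (cf.\ the remark after Corollary~\ref{construction}); this I would obtain from the finiteness of $G$ by choosing an ample line bundle $L$ on the projective variety $X$, forming the $G$-linearized ample norm $M:=\Tensor_{g\in G} g^*L$, and realizing $Y=\Proj\bigl(\Dirsum_{m\geq 0} H^0(X,M^{\tensor m})^G\bigr)$, which is then projective.

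With the hypotheses in place, Corollary~\ref{concrete} gives at once that $Y$ is a non-liftable Calabi--Yau threefold, that it is non-simply connected with $\pi_1^{alg}(Y)=\ZZ/2\ZZ$, and that
\[
  b_i(Y)\leq b_i(X)\quad\text{and}\quad h^{ij}(Y)\leq h^{ij}(X)\qquad\text{for all } i,j.
\]
Thus the desired vanishings for $Y$ follow as soon as the corresponding invariants of $X$ vanish. For the Betti numbers, $b_3(X)=0$ is given, and Poincar\'e duality on the smooth projective threefold $X$ yields $b_5(X)=b_1(X)$, so it suffices that $b_1(X)=0$; this holds for the Kummer-surface pencil $\pi\colon X\to\PP^1$, whose base and generic fibre both have vanishing first Betti number. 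For the Hodge numbers, the Calabi--Yau property gives $h^{01}=h^{02}=0$ and, by Serre duality on $X$ (where $\omega_X\iso\OO_X$), also $h^{31}=h^{32}=0$; the remaining entries in the list are paired by Serre duality, $h^{10}=h^{23}$, $h^{20}=h^{13}$ and $h^{12}=h^{21}$, so the whole list reduces to the three vanishings $h^{10}(X)=h^{20}(X)=h^{12}(X)=0$. Transferring through the inequalities above then gives $b_1(Y)=b_3(Y)=b_5(Y)=0$ and $h^{ij}(Y)=h^{ji}(Y)=0$ for the stated pairs.

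The genuinely substantive input is therefore not the transfer to $Y$, which is formal given Section~2, but the determination of the invariants of $X$ itself---above all the vanishing of $b_1(X)$ (hence $b_5(X)$) and of $h^{10}(X),h^{20}(X),h^{12}(X)$ for Schr\"oer's variety. I expect this to be the main obstacle. The point specific to characteristic $3$ is that, unlike over $\CC$, these do not follow from Hodge symmetry together with general Calabi--Yau formalism: the Hodge--de~Rham spectral sequence does not degenerate here, as $b_3(X)=0$ is already strictly smaller than $h^{03}(X)+h^{30}(X)=2$. Consequently each of these numbers must be extracted from the explicit Moret--Bailly / Kummer-pencil description of $X$. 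Everything else---freeness of $\epsilon$, projectivity of $Y$, the fundamental group, and non-liftability---is either recorded above or immediate from Corollary~\ref{concrete}.
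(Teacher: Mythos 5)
Your proposal has the same skeleton as the paper's proof: both reduce everything to Corollary~\ref{concrete} applied to $G=\langle\epsilon\rangle\iso\ZZ/2\ZZ$, with freeness of $\epsilon$ argued exactly as in the discussion preceding the Proposition, so the substance lies in (i) projectivity of $Y$ and (ii) the vanishing of the relevant invariants of $X$ itself. On (i) you genuinely diverge from the paper, arguably for the better. The paper proves that $\varphi\colon Y\to\PP^1$ is a projective morphism: it takes an ample line bundle on the generic fibre $Y_\eta$ (an Enriques surface), extends it by Zariski closure to a line bundle ${\cal L}$ on $Y$, and then uses constancy of Picard numbers, finiteness of the cokernel of the specialization map $\Pic(Y_\eta)\to\Pic(Y_t)$, and the Nakai--Moishezon criterion on closed fibres to conclude that ${\cal L}$ is $\varphi$-ample. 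Your argument --- the norm $M=\Tensor_{g\in G}g^*L$ of an ample bundle, canonically $G$-linearized, and $Y=\Proj\bigl(\Dirsum_{m\geq 0}H^0(X,M^{\tensor m})^G\bigr)$ --- is the standard proof that a finite-group quotient of a projective variety is projective; it is shorter, avoids the specialization/Picard-number step entirely, and shows that the caveat after Corollary~\ref{construction} is harmless whenever $Y$ arises as a finite group quotient.

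On (ii) there is a real, though acknowledged, gap. Corollary~\ref{concrete} only transfers vanishings from $X$ to $Y$, so you still need $b_1(X)=b_3(X)=b_5(X)=0$ and $h^{10}(X)=h^{20}(X)=h^{12}(X)=0$. Your reduction of the stated list of Hodge numbers to these three, via Serre duality with $\omega_X\iso\OO_X$ and the Calabi--Yau condition, is correct, as is $b_5=b_1$ by Poincar\'e duality; for $b_1(X)=0$ you could argue even more directly than via the fibration, since $h^{01}(X)=0$ forces $\Pic^0(X)$ to be zero-dimensional, hence the Albanese variety to be trivial. But you leave $h^{10}(X)=h^{20}(X)=h^{12}(X)=0$ unproven, correctly observing that Hodge symmetry is unavailable in characteristic $3$ (indeed $b_3(X)=0<h^{03}+h^{30}$ already shows Hodge--de~Rham degeneration fails). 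The paper does not compute these numbers either: it cites Schr\"oer \cite{Schr04} and Ekedahl \cite{Ek}, where these invariants of the Schr\"oer variety are determined. So your argument becomes a complete proof once those citations are supplied; as written, the Hodge-number input for $X$ is the one missing piece.
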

\begin{proof}
According to \cite{Schr04, Ek}, we have $b_1(X)=b_3(X)=b_5(X)=0$
and $h^{ij}(X) = h^{ji}(X)=0$ for $(i,j)=(0,1), (0,2), (1,2),
(1,3), (2,3)$ so that by Corollary~\ref{concrete} the same holds for $Y$.
For the projectivity of $Y$
we have only to prove that
  $\varphi$ is a projective morphism.
  Since $X_\eta := \pi^{-1}(\eta)$ with $\eta\in\PP^1$
  a generic point, is a smooth K3 over $k(\eta)$ and
  $Y_\eta := \varphi^{-1}(\eta)$ is the Enriques surface
  over $k(\eta)$ obtained by taking the quotient of $X_\eta$ by
  the involution. Thus $Y_\eta$ is a smooth projective variety.
  Let ${\cal L}_\eta$ be an ample line bundle of $Y_\eta$.
  By taking the Zariski closure, we can extend ${\cal L}_\eta$
  to a line bundle ${\cal L}$ on $Y$. Since the Picard numbers $\rho(Y_\eta)
  = \rho(Y_t)$
  %\footnote{$\rho(Y_t)=22$なるK3曲面を２重被覆とするEnriques曲面
  %  のPicard数は、一定であると思われるが、それは確かか？}
  ($t\in\PP^1$), the specialization map
  $\Pic(Y_\eta)\To \Pic(Y_t)$ has finite cokernel. Thus
  ${\cal L}.C > 0$ for any  irreducible curve $C\subset Y_t$.
  We also have ${\cal L}_t.{\cal L}_t = {\cal L}_\eta.{\cal L}_\eta>0$
    and thus ${\cal L}_t = {\cal L}\vert_{Y_t}$ is ample.
    Therefore, ${\cal L}$ is $\varphi$-ample.
  \end{proof}

\begin{Remark}
Other examples of non-liftable Calabi-Yau threefolds by Hirokado, Ito
and Saito \cite{H01, HISark,HISmanuscripta} are all desingularized
fiber product of two quasi-elliptic surfaces or fiber product of
elliptic and quasi-elliptic rational surfaces.  A quasi-elliptic
curve is not abelian variety and moreover for a quasi-elliptic curve
$C$ over an algebraically closed field $k$ of characteristic $3$,
$\Aut(C)$ does not contain an element of order~$2$
(Proposition~6~\cite{BMIII}).  Thus we cannot use the Liebermann's
involution as we did for the Schr\"oer variety.
\end{Remark}

\end{document}